\newtheorem{MainThm}{Theorem}
\newtheorem{thm}{Theorem}[section]
\newtheorem{prop}[thm]{Proposition}
\theoremstyle{definition}
\theoremstyle{remark}
\newtheorem{rem}[thm]{Remark}
\numberwithin{equation}{section}
\newcommand{\bP}{\mathbb{P}}
\newcommand{\bQ}{\mathbb{Q}}
\newcommand{\bR}{\mathbb{R}}
\newcommand{\bS}{\mathbb{S}}
\newcommand{\bZ}{\mathbb{Z}}
\newcommand{\gM}{\bold{M}}
\newcommand{\gO}{\bold{O}}
\newcommand{\gS}{\bold{S}}
\newcommand{\gT}{\bold{T}}
\newcommand{\cL}{\mathcal{L}}
\newcommand{\cX}{\mathcal{X}}
\newcommand{\MTO}{\gM \gT \gO }
\newcommand{\MTSO}{\gM \gT \gS \gO}
\newcommand\trf{\mathrm{trf}}
\newcommand\Diff{\mathrm{Diff}}
\newcommand\nmcg[1]{\mathcal{N}_{#1}}
\newcommand{\id}{\operatorname{id}}
\newcommand{\suspinf}{\Sigma^{\infty}}
\newcommand{\loopinf}{\Omega^{\infty}}
\title[Divisibility of characteristic classes]{On the divisibility of characteristic classes of non-oriented surface bundles}
\author{Johannes Ebert, Oscar Randal-Williams}
\address{
Mathematical Institute\\
24-29 St Giles'\\
Oxford\\
OX1 3LB\\
England
}
\email{ebert@maths.ox.ac.uk\\
   randal-w@maths.ox.ac.uk}
\subjclass[2000]{57R20}
\thanks{J. Ebert is supported by a fellowship within the Postdoc-Programme of the German 
Academic Exchange Service (DAAD); \\O. Randal-Williams is supported by an EPSRC Studentship, DTA grant number
EP/P502667/1}
\begin{document}

\begin{abstract}
In this note we introduce a construction which assigns to an arbitrary manifold bundle its fiberwise orientation covering. This is used to show that the zeta classes of unoriented surface bundles are not divisible in the stable range.
\end{abstract}

\maketitle

\section{Introduction}
The mapping class group $\nmcg{g}$ of a non-orientable surface $S_g$ of genus $g$ (that is, the connected sum of $g$ copies of $\mathbb{RP}^2$) is defined to be
$$\nmcg{g} := \pi_0(\Diff(S_g))$$
the group of components of the diffeomorphism group of that surface. If $g \geq 3$, the components of $\Diff(S_g)$ are contractible \cite{EE}, hence $B \nmcg{g} \simeq B \Diff(S_g)$, and so the cohomology of $B \nmcg{g}$ (or the group cohomology of $\nmcg{g}$) can be interpreted as the ring of characteristic classes for $S_g$-bundles.

Wahl \cite{Wahl} has proved a homological stability theorem for these groups, which says that in degrees $* \leq (g-3)/4$ the cohomology groups $H^*(\nmcg{g})$ are independent of the genus $g$. We call this range of degrees the \textit{stable range}. With rational coefficients these stable groups can be identified: there are certain integral characteristic classes $\zeta_i$ in degree $4i$ and the map
$$\bQ [ \zeta_1, \zeta_2, \zeta_3,...] \to H^*(\nmcg{g} ; \bQ)$$
is an isomorphism in the stable range. In \cite{R-W} the second author calculates these stable groups with coefficients in a finite field, and tabulates some low-dimensional integral groups.

The classes $\zeta_i$ are analogues of the even Miller--Morita--Mumford classes, for unoriented surface bundles. Galatius, Madsen and Tillmann \cite{GMT} have studied the divisibility of the Miller--Morita--Mumford classes $\kappa_i \in H^*(\Gamma_\infty ; \bZ)$ in the free part of the integral cohomology of the stable mapping class group $\Gamma_\infty$. They find that the even classes are divisible by 2 and the odd classes are divisible by a denominator of a Bernoulli number. In \cite{Eb2} the first author studied the divisibility of the Miller--Morita--Mumford classes for surface bundles with spin structures, and it was shown that the divisibility increases by a certain power of 2 relative to the non-spin case. Continuing the study of divisibility of characteristic classes of surface bundles, we prove
\begin{MainThm}\label{Main}
The universal zeta classes, $\zeta_n \in H^{4n}(\nmcg{g} ; \bZ)$, are not divisible in the stable range. Indeed, they are not divisible in the free quotient of cohomology $H^{4n}_{free}(\nmcg{g} ; \bZ)$ in this range.
\end{MainThm}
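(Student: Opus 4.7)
The plan is to use the fiberwise orientation covering as a bridge between non-oriented and oriented surface bundles, reducing the non-divisibility of $\zeta_n$ to the divisibility computation of the oriented MMM class $\kappa_{2n}$ obtained in \cite{GMT}.

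First, I would deploy the paper's central construction: to an unoriented $S_g$-bundle $\pi:E\to B$ it assigns the fiberwise orientation cover, an oriented $\Sigma_{g-1}$-bundle $\tilde\pi:\tilde E\to B$ equipped with a free fiberwise orientation-reversing involution $\sigma$ satisfying $\tilde E/\sigma = E$. Naturality in $B$ yields a classifying-space map
\[ \beta:B\nmcg{g}\lra B\Gamma_{g-1} \]
which, in the stable range, corresponds via a Madsen--Tillmann--Weiss type identification to a map of Thom spectra $\MTO(2)\to\MTSO(2)$ coming from the double cover $BSO(2)\to BO(2)$.

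Second, I would establish the universal characteristic-class identity
\[ \beta^*\kappa_{2n} \;=\; 2\cdot\zeta_n \;\in\; H^{4n}(B\nmcg{g};\bZ). \]
The argument is a direct fibre-integration computation: the covering map $p:\tilde E\to E$ pulls the twisted Euler class of $T_\pi$ back to the (untwisted) oriented Euler class of $T_{\tilde\pi}=p^*T_\pi$, and the identities $\tilde\pi_! = \pi_!\circ p_!$ and $p_!\circ p^* = 2\cdot\id$ (for the double cover $p$) combine with the definitions $\zeta_n=\pi_!(e^{2n+1})$ and $\kappa_{2n}=\tilde\pi_!(e(T_{\tilde\pi})^{2n+1})$ to produce the factor of two.

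Third, I would invoke \cite{GMT}: the class $\kappa_{2n}$ is divisible by exactly $2$ in the free part of $H^{4n}(B\Gamma_\infty;\bZ)$, so $\kappa_{2n} = 2\mu_n$ for a generator $\mu_n$ with no further divisibility, and there is a class $[X]\in H_{4n}(B\Gamma_\infty;\bZ)$ with $\langle\mu_n,[X]\rangle=\pm 1$. To conclude, it suffices to construct a lift $[Y]\in H_{4n}(B\nmcg{g};\bZ)$ with $\beta_*[Y]=[X]$; then
\[ \langle\zeta_n,[Y]\rangle \;=\; \tfrac{1}{2}\langle\beta^*\kappa_{2n},[Y]\rangle \;=\; \langle\mu_n,[X]\rangle \;=\; \pm 1, \]
ruling out any non-trivial divisor of $\zeta_n$ in the free quotient.

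The main obstacle is this lifting problem. The most natural candidate family is the $\bR P^2$-bundles $P(W)\to Y$ associated with rank-$3$ real vector bundles $W\to Y$: the orientation cover of $P(W)$ is the oriented $S^2$-bundle $S(W)\to Y$, and stabilising via fibrewise connect-sum with trivial handles embeds this family into $B\nmcg{g}$ for $g$ large. The required computation reduces to identifying $\beta_*$-images in terms of Pontryagin classes of $W$ on $BSO(3)$. Together with the factor-of-two identity above and \cite{GMT}, this handles all primes; the $2$-primary part follows directly from the identity, and the odd-primary non-divisibility is the technically subtle case that requires the explicit geometric lift.
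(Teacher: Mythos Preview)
Your route through \cite{GMT} and a homology lifting problem is more circuitous than the paper's, and the lifting step is not actually carried out. The paper never invokes \cite{GMT}; instead it goes directly to the example you mention in your last paragraph --- the projective bundle $\bP(\gamma_3)\to BSO(3)$ --- and computes $\zeta_n$ on it. From the elementary fact $\kappa_{2n}(\bS(\gamma_3))=2\,p_1(\gamma_3)^n$ and the identity $\kappa_{2n}(\tilde E)=2\,\zeta_n(E)$ (your Step~2, which is Theorem~\ref{thm1} of the paper) one obtains $2\,\zeta_n(\bP(\gamma_3))=2\,p_1(\gamma_3)^n$, hence $\zeta_n(\bP(\gamma_3))=p_1(\gamma_3)^n$ in the free quotient. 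Since $H^*(BSO(3);\bZ)\cong\bZ[p_1,\chi]/(2\chi)$, the class $p_1^n$ generates the free part in its degree and is therefore indivisible; the stable equivalence~(\ref{wahlgmtw}) then transports this to $B\nmcg{g}$. This handles all primes at once.

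Your approach, by contrast, has a genuine gap: you need $\beta_*[Y]=[X]$ for a specific witness $[X]$ produced in \cite{GMT}, but those witnesses are not \emph{a priori} realised by $S^2$-bundles, so the lift is not automatic. To verify it you would in effect have to carry out the direct computation above, at which point the appeal to \cite{GMT} becomes redundant. Your closing claim that the $2$-primary case ``follows directly from the identity'' is also unjustified: from $\beta^*\kappa_{2n}=2\zeta_n$ and $\kappa_{2n}=2\mu_n$ one only obtains $\beta^*\mu_n\equiv\zeta_n$ modulo torsion, and without independent control over $\beta^*$ (or $\beta_*$) this says nothing about the divisibility of $\zeta_n$ at any prime.
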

This gives the trend that extra structure on the vertical tangent bundle, such as an orientation or a spin structure, gives extra divisibility of characteristic classes of surface bundles.

\section{Lifting diffeomorphisms to orientation coverings}

In this section, we will construct a natural homomorphism from the diffeomorphism group $\Diff(M)$ of a smooth $d$-manifold to the group $\Diff^+ (\tilde{M})$ of orientation-preserving diffeomorphisms of the orientation covering of $M$. This implies that any smooth fiber bundle $p:E \to B$ admits a two-fold covering $\pi:\tilde{E} \to E$, such that $p \circ \pi:\tilde{E} \to B$ is an oriented smooth fiber bundle and that the restriction of $\pi:\tilde{E} \to E$ to a fiber of $p$ is the orientation covering.

Let $M$ be a smooth $d$-manifold, $d > 0$, and let $\Lambda^d TM$ be the highest exterior power of the tangent bundle, which is a real line bundle. The total space of the orientation covering of $M$ can be defined as 
\begin{equation}\label{deforcov}
\tilde{M}:=(\Lambda^d TM \setminus 0)/ \bR_{>0}.
\end{equation}
The canonical map $\pi:\tilde{M}\to M$ is a two-sheeted covering. The space $\tilde{M}$ is a smooth manifold, which is orientable. In fact, there is a preferred orientation of $\tilde{M}$. To see this, recall that an orientation of a $d$-dimensional real vector space $V$ is a component of $\Lambda^d V  \setminus 0$, or, in other words, one of the two points of $(\Lambda^d V \setminus ) / \bR_{>0}$. Thus a point in $x \in \tilde{M}$ is by definition an orientation of the tangent space $T_{\pi(x)} M$. On the other hand, the map $\pi$ is a covering and hence a local diffeomorphism. The differential $T_x \pi$ at $x \in \tilde{M}$ is a linear isomorphism $T_x \tilde{M} \to T_{\pi(x)M}$. It follows that $T_{x} \tilde M$ has a preferred orientation: the point $x$ defines an orientation of $T_{\pi(x)} M$ and therefore one of $T_x \tilde{M}$ via the linear isomorphism $T_x \pi$. Using local coordinates on $M$, it is easy to see that the orientations of the tangent spaces $T_x \tilde{M}$ constructed above fit continuously together and define an orientation of $\tilde{M}$, the \emph{preferred orientation} of $M$.

Moreover, this construction is natural: a diffeomorphism $f:M \to N$ of smooth manifolds induces a diffeomorphism $\tilde{f}:\tilde{M} \to \tilde{N}$ which covers $f$. It is easy to see that $\tilde{f}$ is orientation-preserving provided $\tilde{M}$ and $\tilde{N}$ are endowed with the preferred orientations.
If $g:N \to P$ is another diffeomorphism, then $\widetilde{g \circ f} = \tilde{g} \circ \tilde{f}$. Also, $\tilde{\id_M}= \id_{\tilde{M}}$. Finally, we did not use that $f$ is a diffeomorphism, but only that the differential of $f$ was nonsingular. It follows that the assignments $M \mapsto \tilde{M}$ and $f \mapsto \tilde{f}$ define a functor $\cL$ from the category $\cX_d$ of smooth $d$-manifolds and local diffeomorphisms to the category $\cX_{d}^{+}$ of oriented $d$-manifolds and orientation-preserving local diffeomorphisms. In particular, we defined a group homomorphism $\cL_M: \Diff(M) \to \Diff^{+}(\tilde{M})$.

For a manifold $M$, we denote by $\pi_{M}$ the covering map $\tilde{M} \to M$ and by $\iota_M:\tilde{M} \to \tilde{M}$ the unique nontrivial deck transformation. If $f:M \to N$ is a (local) diffeomorphism, the following relations hold
\begin{equation}\label{relations} 
\pi_N \circ \tilde{f}= f \circ \pi_M; \tilde{f} \circ \iota_M = \iota_N \circ \tilde{f}.
\end{equation}
The morphism spaces of the categories $\cX_d$ and $\cX_{d}^{+}$ have a natural topology, the weak $C^{\infty}$-topology, with respect to which the composition maps are continuous. Thus $\cX_d$ and $\cX_{d}^{+}$ are topological categories. Using local coordinates, it is easy to see that the functor $\cL$ is continuous. In particular, the homorphism $\cL_M: \Diff(M) \to \Diff^{+}(\tilde{M})$ is continuous.

Let us now discuss smooth fiber bundles. Let $p:E \to B$ be a smooth fiber bundle with fiber a $d$-dimensional smooth manifold $M$ and structural group $\Diff(M)$ (with the weak $C^{\infty}$-topology). Consider the associated $\Diff(M)$-principal bundle $Q \to B$, which has the property that $Q \times_{\Diff(M)} M \cong E$. Via the homomorphism $\cL_M$, the manifold $\tilde{M}$ has a $\Diff(M)$-action by orientation preserving diffeomorphisms. Hence the fiber bundle
\[
q:\tilde{E}:= Q \times_{\Diff(M)} \tilde{M} \to M
\]
is an oriented smooth fiber bundle with fiber $\tilde{M}$. Because of (\ref{relations}), there is a twofold covering $\pi_E : \tilde{E} \to E$, such that $q = p \circ \pi_E$. Furthermore, there is a fiber-preserving and orientation-reversing involution $\iota_E$ on $\tilde{E}$. We call $\tilde{E}$ the \emph{fiberwise orientation cover} of $E$. We summarize the results of this section.

\begin{thm}\label{fibwiseorcov}
The fiberwise orientation cover $\pi:\tilde{E} \to E$ of a smooth fiber bundle $p:E \to B$ is a two-sheeted covering whose restriction to any fiber $E_b$ of $p$ is the orientation cover of $E_b$. The composition $q = p \circ \pi_E$ is an oriented fiber bundle. Furthermore, $\tilde{E}$ and $\pi_E$ are uniquely determined by these properties (up to orientation-preserving isomorphism).
\end{thm}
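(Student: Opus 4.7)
My plan is to divide the claim into three parts: (i) that $\pi_E$ restricts on each fiber to the orientation cover, (ii) that $q = p \circ \pi_E$ is an oriented fiber bundle, and (iii) uniqueness up to orientation-preserving isomorphism over $E$. Parts (i) and (ii) are essentially bookkeeping on the construction preceding the theorem. For (i), a local trivialization of the principal bundle $Q$ over $b \in B$ identifies $E_b$ with $M$ and the fiber $Q|_b \times_{\Diff(M)} \tilde{M}$ of $\tilde{E}$ over $b$ with $\tilde{M}$, compatibly with $\pi_E|_{E_b} = \pi_M$; the identification is unique up to the action of $\Diff(M)$, which intertwines the orientation covering map with itself. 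For (ii), the homomorphism $\cL_M$ factors through $\Diff^+(\tilde{M})$, so the transition functions of $q$ preserve the preferred orientation of the fiber $\tilde{M}$.

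The real content lies in the uniqueness statement (iii). My plan is to give an intrinsic description of $\tilde{E}$ using the vertical tangent bundle $T^v E := \ker(Dp \colon TE \to p^* TB)$, a rank-$d$ real vector bundle on $E$. Set
$$\tilde{E}_{\mathrm{int}} := (\Lambda^d T^v E \setminus 0)/\bR_{>0}.$$
This is a two-sheeted cover of $E$ whose restriction to each fiber $E_b$ is, by construction, the orientation cover of $E_b$ (since $T^v E|_{E_b} = TE_b$), and which carries a canonical orientation of the fibers over $B$ obtained from the preferred orientation of Section 2 applied fiberwise. Using $T^v E = Q \times_{\Diff(M)} TM$, the associated-bundle construction identifies $\tilde{E}_{\mathrm{int}}$ with $\tilde{E}$ over $E$ and over $B$, compatibly with orientations.

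Now suppose $\pi' \colon \tilde{E}' \to E$ is any other two-sheeted covering satisfying the conclusions of the theorem. Since $\pi'$ is a local diffeomorphism, its differential induces a canonical isomorphism $T^v \tilde{E}' \cong (\pi')^* T^v E$. The orientation of the fibers of $q' := p \circ \pi'$ then selects, at each point $\tilde{e} \in \tilde{E}'$, an orientation of $T^v_{\pi'(\tilde{e})} E$. This defines a smooth map $\phi \colon \tilde{E}' \to \tilde{E}_{\mathrm{int}}$ over the identity on $E$, and $\phi$ preserves the fiberwise orientation by construction.

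I expect the crux to be the last verification: that $\phi$ restricts to a bijection on each fiber of $\pi'$. Concretely, at each $e \in E$, the two points of $(\pi')^{-1}(e)$ must go to the two opposite orientations of $T^v_e E = T_e E_b$. This uses the hypothesis that $\pi'|_{E_b}$ is the orientation cover of $E_b$, so these two points encode opposite orientations of $T_e E_b$ by definition, together with the compatibility (from Section 2) of the fiberwise orientation on $\tilde{E}'$ with the canonical orientation of the tangent bundle of the orientation cover. Once this is established, $\phi$ is an orientation-preserving isomorphism of two-sheeted covers, giving the asserted uniqueness.
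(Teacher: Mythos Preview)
Your proposal is correct. Note that in the paper this theorem is presented explicitly as a summary of the preceding discussion (``We summarize the results of this section''), and no separate proof is given. Your treatment of parts (i) and (ii) matches exactly what the paper establishes by construction: the associated-bundle description $\tilde{E}=Q\times_{\Diff(M)}\tilde{M}$ together with the relations \eqref{relations} give the covering property fiberwise, and the factorization of $\cL_M$ through $\Diff^+(\tilde{M})$ gives the oriented-bundle structure on $q$.

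For part (iii), the paper asserts uniqueness but does not argue it, so here you are supplying something the paper omits. Your approach via the intrinsic model $\tilde{E}_{\mathrm{int}}=(\Lambda^d T^v E\setminus 0)/\bR_{>0}$ is the natural one and is in the spirit of the paper's own definition \eqref{deforcov} of the orientation cover of a single manifold; indeed $T^v E = Q\times_{\Diff(M)} TM$ makes the identification $\tilde{E}_{\mathrm{int}}\cong\tilde{E}$ immediate. One small point worth making explicit in your last paragraph: the statement that the two points of $(\pi')^{-1}(e)$ map under $\phi$ to \emph{opposite} orientations of $T^v_e E$ is equivalent to the deck transformation of $\pi'|_{E_b}$ being orientation-reversing for the chosen fiberwise orientation on $\tilde{E}'$. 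When $E_b$ is non-orientable this is automatic (the orientation cover is connected and its deck transformation reverses every orientation), which is the case of interest in the paper; your appeal to ``compatibility with the canonical orientation'' can be replaced by this observation, and then $\phi$ is automatically orientation-preserving because $\pi_{\mathrm{int}}\circ\phi=\pi'$ and both $D\pi'$ and $D\pi_{\mathrm{int}}$ carry the respective fiber orientations to the same orientation of $T^v_e E$ by the very definition of $\phi$.
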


We conclude with a simple remark. All the constructions in this section make sense when the manifold $M$ (or the fiber bundle $E$) is orientable. If this is the case, then $\tilde{M}$ is the disjoint sum of two copies of $M$. The choice of an orientation of $M$ singles out a component of $\tilde{M}$.

\section{Characteristic classes of surface bundles}

In this section, we give a brief review of the theory of characteristic classes of surface bundles, both oriented and non-oriented. First we discuss the oriented case. Let $\pi:E \to B$ be an oriented surface bundle and let $T_v E$ be the vertical tangent bundle. It is an oriented $2$-dimensional real vector bundle on $E$ and thus it has an Euler class $e(T_v(E)) \in H^2 (E; \bZ)$. We can consider $T_v E$ also as a complex line bundle (there is a complex structure on it, which is unique up to isomorphism) and the Euler class agrees with the first Chern class. The Miller--Morita--Mumford classes are defined to be
\[
\kappa_n (E) := \pi_{!} (e(T_vE)^{n+1}) \in H^{2n} (B; \bZ),
\] 
where $\pi_{!}:H^* (E; \bZ) \to H^{*-2}(B; \bZ)$ is the umkehr, or cohomological fiber-integration, map. This definition cannot be generalized to the non-oriented case without further effort, because both the Euler class and the umkehr map only exist for oriented surface bundles.

The concept needed for a generalization is the Becker--Gottlieb transfer \cite{BG}. Let $p:E \to B$ be a smooth fiber bundle with compact fibers diffeomorphic to $F$ (not necessarily of dimension $2$). The transfer is a stable map in the converse direction, more precisely, it is a map of the suspension spectra
\[
\trf_p : \suspinf B_+ \to \suspinf E_+.
\]
Recall that the spectrum cohomology of the suspension spectrum of a space $\suspinf X_+$ agrees with the ususal cohomology of the space $X$. Thus we can form the map $\trf_{p}^{*} \circ p^{*}: H^*(B;\bZ) \to H^*(B;\bZ)$, and for all $x \in H^*(B ; \bZ)$ we have 
\begin{equation}\label{eulernumber}
\trf_{p}^{*} \circ p^{*}(x) = \chi(F) \cdot x,
\end{equation}
where $\chi(F)$ denotes the Euler number of the fiber (\cite[Theorem 5.5]{BG}). Furthermore, if $q: \tilde{E} \to E$ is another smooth fiber bundle with compact fibers, then $p \circ q$ is also such a fiber bundle. In this situation the composition of the transfers is homotopic to the transfer of the composition (see \cite[Equation 2.2, page 137]{BM}):
\begin{equation}\label{transhomotop}
\trf_{p \circ q} \simeq \trf_q \circ \trf_p.
\end{equation}
A diffeomorphism $f:M \to N$ of manifolds can be considered as a fiber bundle whose fiber is a point. By (\ref{eulernumber}),
\begin{equation}\label{transfdiff}
\trf_{f}^* \circ f^* = \id_{H^*(N;\bZ)}, f^* \circ \trf^{*}_{f} = \id_{H^*(M;\bZ)}.
\end{equation}
In fact, $\trf_f$ and $\suspinf (f^{-1})$ are homotopic, but we do not need this fact. The transfer of an oriented fiber bundle $p:E \to B$ is closely related to the umkehr map. For all $x \in H^*(E; \bZ)$, one has (see \cite[Theorem 4.3]{BG})
\begin{equation}\label{transumkehr}
\trf_{p}^{*} (x)= p_{!} ( x \cup e(T_v E) ).
\end{equation}
The identity (\ref{transumkehr}) implies
\begin{equation}\label{mumtrans}
\kappa_n(E) = \trf^{*}_{p} (e(T_v E)^n)
\end{equation}
for the Miller--Morita--Mumford classes of an oriented surface bundle $p:E \to B$. Because of the identity $p_1 (L) = e(L)^2$ for the Pontrjagin class of a $2$-dimensional oriented real vector bundle $L$, we see that
\begin{equation}\label{kappapontr}
\kappa_{2n}(E) = \trf^{*}_{p} (p_1(T_v E)^n).
\end{equation}
This generalises to the non-oriented case. Wahl defines (\cite[page 3]{Wahl})
\begin{equation}\label{zetaclass}
\zeta_i (E) := \trf_{p}^{*} (p_1 (T_v E)^{i}) \in H^{4i} (B;\bZ),
\end{equation}
for a non-oriented surface bundle $p:E \to B$, where $p_1 (T_vE) \in H^4 (E;\bZ)$ is the first Pontrjagin class of the vertical tangent bundle. Now we can state and prove the main result of this section.
\begin{thm}\label{thm1}
Let $p:E \to B$ be a non-oriented surface bundle with compact fibers and let $c: \tilde{E} \to E$ be its fiberwise orientation covering. Denote $q := p \circ c: \tilde{E} \to B$. Then the following relations hold:
\begin{enumerate}
\item For all $n \geq 0$, we have $\kappa_{2n} (\tilde{E}) =2 \cdot \zeta_n (E)$.
\item For all $n \geq 0$, we have $2 \cdot \kappa_{2n+1} (\tilde{E}) =0$.
\end{enumerate}
\end{thm}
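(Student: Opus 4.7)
Both claims will follow formally from the three transfer identities recalled above: multiplicativity under composition (\ref{transhomotop}), the Euler characteristic formula (\ref{eulernumber}), and the identity (\ref{transfdiff}) for diffeomorphisms. The one bundle-theoretic input is that since $c$ is a local diffeomorphism, its differential induces a bundle isomorphism $T_v \tilde{E} \cong c^* T_v E$ of unoriented real $2$-plane bundles; in particular $p_1(T_v \tilde{E}) = c^* p_1(T_v E)$.

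For part (1), first use (\ref{transhomotop}) to write $\trf_q^* = \trf_p^* \circ \trf_c^*$. Then, applying the oriented identity (\ref{kappapontr}) to $q \colon \tilde{E} \to B$ and using the preliminary observation, compute
\[
\kappa_{2n}(\tilde{E}) = \trf_q^*(p_1(T_v \tilde{E})^n) = \trf_p^* \bigl(\trf_c^* c^*(p_1(T_v E)^n)\bigr).
\]
Since the fibers of the double cover $c$ are two-point sets with Euler characteristic $2$, (\ref{eulernumber}) gives $\trf_c^* \circ c^* = 2 \cdot \id$, and the right-hand side becomes $2\trf_p^*(p_1(T_v E)^n) = 2\zeta_n(E)$ by the definition (\ref{zetaclass}).

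For part (2), the key ingredient is the involution $\iota_E$. Because $\iota_E$ is fiber-preserving over $B$ and restricts on each fiber of $q$ to the deck transformation of an orientation cover, the derivative $d\iota_E$ reverses the orientation of $T_v \tilde{E}$; equivalently $\iota_E^* e(T_v \tilde{E}) = -e(T_v \tilde{E})$. Next, from $c \circ \iota_E = c$ and (\ref{transhomotop}) we obtain $\trf_c \simeq \trf_{\iota_E} \circ \trf_c$, and (\ref{transfdiff}) together with $\iota_E^2 = \id$ gives $\trf_{\iota_E}^* = \iota_E^*$, so passing to cohomology yields the identity $\trf_c^* = \trf_c^* \circ \iota_E^*$. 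Applied to $x := e(T_v \tilde{E})^{2n+1}$, which satisfies $\iota_E^* x = -x$ because the exponent is odd, this forces $\trf_c^*(x) = -\trf_c^*(x)$, hence $2\trf_c^*(x) = 0$. Consequently
\[
2 \kappa_{2n+1}(\tilde{E}) = 2 \trf_p^* \trf_c^*(x) = \trf_p^*(2 \trf_c^*(x)) = 0.
\]

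There is no genuine obstacle here: everything is formal once the action of $\iota_E$ on $e(T_v \tilde{E})$ is identified. The only point requiring a moment of care is to confirm that $\iota_E$ reverses the \emph{vertical} orientation (rather than merely the orientation of $\tilde{E}$ itself), but this is immediate from the construction of the fiberwise orientation cover, since $\iota_E$ restricts on each fiber of $q$ to the sheet-swapping deck transformation of an orientation cover, which is orientation-reversing by the construction in Section~2.
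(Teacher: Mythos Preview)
Your proof is correct and follows essentially the same route as the paper's: both parts use the transfer composition formula together with $T_v\tilde{E}\cong c^*T_vE$, the Euler-characteristic identity for the double cover $c$, and for part~(2) the relation $\trf_c^*=\trf_c^*\circ\iota_E^*$ combined with $\iota_E^*e(T_v\tilde{E})=-e(T_v\tilde{E})$. The only cosmetic difference is that the paper phrases the conclusion of~(2) as $\kappa_{2n+1}(\tilde{E})=-\kappa_{2n+1}(\tilde{E})$ rather than first deducing $2\,\trf_c^*(x)=0$.
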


\begin{proof} For the identity (1), we compute
\begin{eqnarray*}
\kappa_{2n}(\tilde{E}) & = & \trf_{q}^{*} ((p_1 (T_v \tilde{E})^n) \\
& = & \trf_p^*(\trf_c^*(c^*(p_1(T_v E)^n))) \\
& = & \trf_p^*(2 \cdot p_1(T_v E)^n) \\
& = & 2 \cdot \zeta_n(E).
\end{eqnarray*}
The first eqality is (\ref{kappapontr}). Because $c:\tilde{E} \to E$ is a smooth covering in every fiber, $c^*(T_vE) \cong T_v (\tilde{E})$, whence $p_1 (T_v \tilde{E}) = c^*(p_1(T_v E))$. Together with (\ref{transhomotop}), this fact implies the second equality. Because $c$ is a double covering, the Euler number of its fiber is $2$. Thus $\trf^{*}_{c} \circ c^* = 2$, which gives the third equality. The fourth equality is the definition.

For the proof of identity (2), we use the orientation-reversing involution $\iota$ on $\tilde{E}$. By (\ref{transfdiff}), $\trf_{\iota}^{*}= (\iota^*)^{-1}= \iota^*$. Because $c \circ \iota =c$, it follows that $\trf_{c}^{*} = \trf_{c}^{*} \circ \trf_{\iota}^{*} =  \trf_{c}^{*} \circ{\iota}^{*} $. Because $\iota$ is an orientation-reversing fiberwise diffeomorphism, it induces an orientation-reversing vector bundle isomorphism $d \iota: T_v \tilde{E} \to \iota^* T_v \tilde{E}$. Thus $e(T_v \tilde{E})= - \iota^* e (T_v\tilde{E})$. Thus
\begin{eqnarray*}
\kappa_{2n+1}(\tilde{E})  & = & \trf_{p}^{*} \trf_{c}^{*} ( e(T_v(\tilde{E})^{2n+1}) \\
& = & \trf_{p}^{*} \trf_{c}^{*} \iota^* ( e(T_v(\tilde{E})^{2n+1})\\
& = & (-1)^{2n+1} \trf_{p}^{*} \trf_{c}^{*} ( e(T_v(\tilde{E})^{2n+1})\\
& = & - \kappa_{2n+1}(\tilde{E}).
\end{eqnarray*}
\end{proof}

\begin{rem}
An implication of this theorem is that for an oriented surface bundle $E' \to B$, the characteristic classes $2 \cdot \kappa_{2n+1}(E')$ are obstructions to $E'$ admitting an orientation-reversing fixed-point free fiberwise involution. Furthermore, for bundles which do admit such an involution, it gives an interpretation of $\frac{1}{2} \kappa_{2n}(E')$ as the zeta classes of the associated quotient bundle of non-orientable surfaces.
\end{rem}

\section{An example}

In this section, we consider the rather easy example of a genus zero surface bundle. Let $\gamma_3 \to BSO(3)$ be the universal $3$-dimensional oriented Riemannian real vector bundle and let $\bS(\gamma_3) \to BSO(3)$ be its unit sphere bundle. It is known that this is the universal smooth oriented bundle with fiber $\bS^2$, but we do not need this fact. It is not hard to see that $\kappa_{2n}(\bS(\gamma_3))= 2 p_{1}(\gamma_3)^n$, compare \cite[page 49]{Eb}. The bundle $\bS(\gamma_3)$ admits an orientation-reversing, fixed-point free involution on its fibers, namely the antipodal map $- \id$. The quotient is $\bP(\gamma_3)$, the $\bR \bP^2$-bundle associated to $\gamma_3$. By Theorem \ref{thm1}, we have
\begin{equation}
2 \zeta_n (\bP(\gamma_3)) = 2 p_{1}(\gamma_3)^n.
\end{equation}
The integral cohomology ring of $BSO(3)$ is not too hard to compute. Denote by $\chi \in H^3 (BSO(3); \bZ)$ the universal Euler class. Then we have
\begin{equation}
H^*(BSO(3);\bZ) \cong \bZ[p_1, \chi]/(2\chi).
\end{equation}
To see this, one uses the Leray--Serre spectral sequence of the fibration $\bS^2 \to BSO(2) \to BSO(3)$. In particular, the powers $p_{1}^{n}(\gamma_3)$ are not divisible in the free quotient $H_{free}^{*}(BSO(3);\bZ)$ of $H^*(BSO(3) ; \bZ)$. We have shown:

\begin{prop}\label{g0notdiv}
The class $ \zeta_n (\bP(\gamma_3)) $ is not divisible in $H_{free}^{*}(BSO(3);\bZ)$.
\end{prop}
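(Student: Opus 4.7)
The plan is to use the identity $2\zeta_n(\bP(\gamma_3)) = 2 p_1(\gamma_3)^n$ established just above the statement, combined with the explicit ring structure $H^*(BSO(3);\bZ) \cong \bZ[p_1,\chi]/(2\chi)$, to pin down the image of $\zeta_n(\bP(\gamma_3))$ in the free quotient.

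First I would examine $H^{4n}(BSO(3);\bZ)$ additively. Since $|p_1|=4$ and $|\chi|=3$, a monomial $p_1^i \chi^j$ lies in degree $4n$ precisely when $4i+3j = 4n$, forcing $j \equiv 0 \pmod 4$, say $j=4k$ and $i = n-3k$ with $0 \leq k \leq n/3$. The monomial $p_1^n$ (the case $k=0$) is $\chi$-free and generates a $\bZ$-summand, while every monomial with $k \geq 1$ contains a factor of $\chi$ and is therefore $2$-torsion (because $2\chi = 0$). Consequently, the torsion subgroup of $H^{4n}(BSO(3);\bZ)$ consists exactly of the $\chi$-divisible monomials, and the free quotient is isomorphic to $\bZ$ with generator the image of $p_1^n$.

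Now the identity $2\zeta_n(\bP(\gamma_3)) = 2 p_1(\gamma_3)^n$ rearranges to $2 \bigl( \zeta_n(\bP(\gamma_3)) - p_1(\gamma_3)^n \bigr) = 0$, so $\zeta_n(\bP(\gamma_3)) - p_1(\gamma_3)^n$ is $2$-torsion and hence lies in the torsion subgroup described above. Passing to $H^{4n}_{free}(BSO(3);\bZ)$ therefore sends $\zeta_n(\bP(\gamma_3))$ to the same element as $p_1(\gamma_3)^n$, namely a generator of $\bZ$. A generator of $\bZ$ is not divisible by any integer greater than $1$, and this gives the claim.

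There is no serious obstacle: the argument is essentially a bookkeeping exercise in $H^*(BSO(3);\bZ)$. The only thing to be mildly careful about is the degree count that isolates $p_1^n$ as the unique $\chi$-free monomial in degree $4n$, so that the torsion-versus-free decomposition is transparent and the image of $\zeta_n(\bP(\gamma_3))$ in the free quotient is forced to agree with that of $p_1(\gamma_3)^n$.
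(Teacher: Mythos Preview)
Your argument is correct and follows the same route as the paper: both use the identity $2\zeta_n(\bP(\gamma_3)) = 2p_1(\gamma_3)^n$ together with the ring structure $H^*(BSO(3);\bZ)\cong\bZ[p_1,\chi]/(2\chi)$ to conclude that $\zeta_n(\bP(\gamma_3))$ and $p_1(\gamma_3)^n$ agree in the free quotient, where the latter is a generator. You have simply spelled out in detail the degree count and torsion analysis that the paper leaves implicit in the sentence ``In particular, the powers $p_1^n(\gamma_3)$ are not divisible in the free quotient.''
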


\begin{rem}
It may look a bit confusing that the universal $\bR \bP^2$-bundle has base space $BSO(3)$, but this is indeed the case. The reason is the isomorphism of groups $\bP O(3) \cong O(3) / \{ \pm 1 \} \cong SO(3)$.
\end{rem}

\section{A review of the stable homotopy theory of surfaces and proof of Theorem \ref{Main}}

In this section, we give a brief survey of the modern homotopy theory of surface bundles developped by Tillmann, Madsen, Weiss and Galatius. Let us first discuss the oriented case. A good survey can be found in \cite{GMT}, which also contains references to all relevant papers.\footnote{The paper \cite{GMT} uses a different notation: they denote $\MTSO (2)$ by $\mathbb{CP}^{\infty}_{-1}$.} A new proof of the main results which can be generalized to the non-oriented case can be found in \cite{GMTW}.

Consider the universal complex line bundle $L \to BSO(2)$. There does not exist a vector bundle $V$ such that $V \oplus L$ is trivial. But we can define an additive inverse $L^{\bot}$ of $L$ as a \emph{stable vector bundle}. The \emph{Madsen--Tillmann spectrum} $\MTSO (2)$ is by definition the Thom spectrum of $L^{\bot}$. For any oriented surface bundle $E \to B$, there exists a natural map 
\[
\alpha_E: B \to \loopinf_{0} \MTSO (2)
\]
into the unit component of the infinite loop space of the Madsen--Tillmann spectrum. In particular, it can be defined for the universal oriented surface bundle with fibers a surface $F_g$ of genus $g$. We obtain a universal map
\[
\alpha_g: B \Diff^+(F_g) \to \loopinf_{0} \MTSO (2).
\]
For all $n > 0$, there exists a cohomology class $y_n \in H^{2n}(\loopinf_{0} \MTSO (2); \bZ)$ such that for any surface bundle
\begin{equation}\label{universalkappa}
\alpha_{E}^{*}(y_n) = \kappa_n (E).
\end{equation}
The rational cohomology of $\Omega_0^\infty \MTSO(2)$ is isomorphic to the polynomial ring $\bQ \, [ y_1, y_2, \ldots]$.
The map $\alpha_g$ induces an isomorphism on homology groups in the stable range, that is,
\[
H_k(\alpha_g):H_k( B \Diff^+(F_g);\bZ) \to H_k(\loopinf_{0} \MTSO (2);\bZ)
\]
is an isomorphism as long as $g \geq 2k+2$.

Similar results are true in the non-oriented case. The Madsen--Tillmann spectrum is replaced by $\MTO (2) $, which is the Thom spectrum of the stable inverse of the universal $2$-dimensional real vector bundle over $BO(2)$. There exists an analogue of the map $\alpha$ for any non-oriented surface bundle. There are classes $x_n \in H^{4n}(\loopinf_{0} \MTO(2); \bZ)$, $ n >0$, such that $\alpha_{E}^{*}(x_n) = \zeta_n (E)$. These things are completely analogous to the oriented case. The rational cohomology ring of $\loopinf \MTO (2)$ is isomorphic to the polynomial ring $\bQ \, [ x_1, x_ 2, \ldots]$.

The analogue of the Madsen--Weiss theorem is also true in the non-oriented case. More precisely
\begin{equation}\label{wahlgmtw}
H_k (\alpha; \bZ): H_k(B \Diff(S_g);\bZ) \to H_k(\loopinf_{0} \MTO (2); \bZ)
\end{equation}
is an isomorphism as long as $4k + 3 \leq g$. The proof of this theorem consists of two parts: one part is the proof of the analogue of the Harer--Ivanov stability theorem in the non-oriented case and was done by Wahl \cite{Wahl}. The other part is the determination of the homotopy type of an appropriate cobordism category by Galatius, Madsen, Tillmann and Weiss \cite{GMTW}.

\begin{proof}[Proof of Theorem \ref{Main}]
This is now straightforward. We assume that the universal class $\zeta_{n}$ is divisible in the stable range. It follows, by (\ref{wahlgmtw}) that the class $ x_n \in H^{4n}(\loopinf MTO(2); \bZ)$ is also divisible. We have seen in Proposition \ref{g0notdiv} that the image of $x_n \in H^{4n}(BSO(3); \bZ)$ under the map $\alpha: BSO(3) \to \loopinf \MTO(2)$ is not divisible. This is a contradiction.
\end{proof}

\end{document}